\newtheorem{theorem}{Theorem}[section]
\newtheorem{lemma}[theorem]{Lemma}
\newtheorem{corollary}[theorem]{Corollary}
\theoremstyle{definition}
\newtheorem{definition}[theorem]{Definition}
\newtheorem{remark}[theorem]{Remark}
\theoremstyle{remark}
\numberwithin{equation}{section}
\def\cc{{\mathbb C}}
\def\kk{{\mathbb K}}
\def\zz{{\mathbb Z}}
\def\rr{{\mathbb R}}
\def\pp{{\mathbb P}}
\def\Ish{{\mathcal I}}
\def\Osh{{\mathcal O}}
\def\R{{\mathcal R}}
\def\I{\mathcal{J}_{\rm irr}}
\def\Nef{\operatorname{Nef}}
\def\Mov{\operatorname{Mov}}
\def\Cl{\operatorname{Cl}}
\def\Pic{\operatorname{Pic}}
\def\WDiv{\operatorname{WDiv}}
\def\Spec{\operatorname{Spec}}
\def\ep{\varepsilon}
\begin{document}
\title{Hypersurfaces in Mori dream spaces}
\author{Michela Artebani}
\author{Antonio Laface}
\begin{abstract}
Let $X$ be a hypersurface
of a Mori dream space $Z$.
We provide necessary
and sufficient conditions
for the Cox ring $\R(X)$
of $X$
to be isomorphic 
to $\R(Z)/(f)$, where 
$\R(Z)$ is the Cox ring 
of $Z$ and $f$ is a defining
section for $X$.
We apply our results to 
Calabi-Yau hypersurfaces 
of toric Fano fourfolds.
Our second application is to
general degree $d$ 
hypersurfaces in $\mathbb P^n$ 
containing a linear subspace 
of dimension $n-2$.
%is a Mori dream space and 
%we compute its Cox ring.
% We show how to give a 
%presentation of the Cox ring $\R(X)$
%is some cases.
\end{abstract}
\maketitle

\section*{Introduction}
Let $Z$ be a projective 
Mori dream space, that is 
a normal projective variety 
with finitely generated 
class group and finitely 
generated Cox ring:
\[
 \mathcal R(Z)
 =
 \bigoplus_{[D]\in \Cl(Z)} 
 H^0(Z,\mathcal O_Z(D)).
\]
Given an inclusion $i: X\to Z$
of a closed irreducible 
normal subvariety $X$ of $Z$ such 
that the restriction of 
Weil divisors to $X$ is well 
defined, there is a natural 
homomorphism 
$i_{\mathcal R}:
\mathcal R(Z)\to \mathcal R(X)$. 
In order to relate the 
two Cox rings it is natural 
to ask whether such 
homomorphism is surjective 
and what is its kernel.
In~\cite{Hau} J. Hausen 
studied the case when 
$Z$ is a smooth toric 
variety.
In particular, he proved 
that  $\mathcal R(X)$ is 
isomorphic to a quotient 
of $\mathcal R(Z)$ via 
the homomorphism $i_{\mathcal R}$ 
if the inclusion 
$i:X\to Z$ is {\em neat},
which essentially means 
that the pull-back 
$i^*:\Cl(Z)\to \Cl(X)$ 
is well defined and is 
an isomorphism.
This result and its proof 
can be extended to the 
case when $Z$ is a factorial 
Mori dream space.
In this paper we use this 
generalized version of 
J. Hausen's theorem to study 
the case when $X$ is a 
hypersurface in $Z$.
More precisely, when $X$
is a normal, irreducible
and closed  
hypersurface of a 
Mori dream space $Z$,
we find necessary and 
sufficient conditions 
for the homomorphism 
$i_{\mathcal R}$ to induce
an isomorphism
$\mathcal R(Z)/(f)
\cong 
\mathcal R(X)$, where $f$ 
is a defining section for $X$. 
In case $Z$ is factorial, 
such conditions are the 
following ones: 
the pull-back $i^*:\Cl(Z)\to \Cl(X)$ 
is an isomorphism and 
the irrelevant locus 
has codimension $\geq 3$ 
in $\bar Z={\rm Spec}(\mathcal R(Z))$.
This generalizes the
main result of~\cite{Jow10}
to the non-smooth case.
Moreover, in case $X$ 
is the generic element 
of an ample and spanned 
linear series on $Z$, 
we show that the latter 
condition on the 
codimension of the irrelevant
locus is enough to
guarantee the isomorphism.

The paper is organized as
follows.
In Section 1 we introduce 
good and neat embeddings.
Section 2 contains our main
theorem~\ref{hyp} together with
its corollary about
general elements
in ample linear series 
of Mori dream spaces.
An application of Theorem~\ref{hyp}
to smooth Mori dream
Calabi-Yau hypersurfaces
of smooth toric Fano
varieties is given in Section 3.
Finally in Section 4 we 
compute the 
Cox ring of a general 
degree $d$ hypersurface 
in $\mathbb P^n$ containing 
a linear subspace of 
dimension $n-2$.

\subsubsection*{Acknowledgments}
 It is a pleasure to thank
 Elaine Herppich and
 Gian Pietro Pirola for several
 useful discussions.
% about Remark~\ref{nonnef}.

\section{Embeddings in Mori dream spaces}
In what follows  $Z$  will be a 
normal projective variety over 
an algebraically closed field 
$\kk$ of characteristic zero 
with finitely generated class 
group $\Cl(Z)$. 
We briefly recall the definition 
of the Cox ring of $Z$ as given 
in~\cite[Chapter I, \S4]{ArDeHaLa}.
Let $K$ be a subgroup of $\WDiv(Z)$ 
such that the natural homorphism 
$c:K\to \Cl(Z)$, mapping $D$ to 
its class $[D]$, is surjective.
Let $K^0=\ker(c)$ and fix
a character $\chi:K^0\to \mathbb K(Z)^*$ 
with ${\rm div}(\chi(D))=D$ 
for all $D\in K^0$.
Consider the following sheaves
graded by $K$ and $\Cl(Z)$
respectively:
\[
 \mathcal S
 :=
 \bigoplus_{D\in K} 
 \Osh_Z(D),
 \quad
 \R
 =
 \mathcal S/\Ish,
 \]
where $\Ish$ is the 
ideal sheaf locally
generated by elements 
of the form $1-\chi(D)$,
with $D\in K^0$. 
The isomorphism class of 
$\mathcal R(Z)$ as a 
$\Cl(Z)$-graded ring does 
not depend on the choices 
of $K$ and $\chi$, so that 
it is called the 
{\em Cox ring} of $Z$. 
The variety $Z$ is called 
a {\em Mori dream space} 
if $\mathcal R(Z)$ is 
finitely generated. 

For any effective class 
$w\in\Cl(Z)$ we define 
the $\zz$-graded algebra 
$\R(Z,w):=\oplus_{n\in\zz}\R(Z)_{nw}$
and let $\R(Z,w)_{>0}$ be 
the subalgebra
consisting of all the 
positively graded elements.
We recall that the 
{\em irrelevant ideal} 
$\I(Z)$ 
of $\R(Z)$ is defined 
to be the radical 
of the ideal generated 
by the subalgebra 
$\R(Z,w)_{>0}$,
for any choice of an 
ample class $w$
(see~\cite[Chap. I, \S 6]{ArDeHaLa}).
Let
\[
 \bar Z:={\rm Spec}\, 
 \R(Z),
 \quad
 \hat Z=\bar Z-V(\I(Z)).
\]
The open subset $\hat{Z}$ 
is known to be {\em big} 
in $\bar{Z}$, that is its 
complementary has codimension 
at least $2$.
Moreover there exists a 
good quotient $p_Z: \hat{Z}\to Z$ 
with respect to the action of 
the quasi-torus 
$H:=\Spec(\kk[\Cl(Z)])$ induced by the 
$\Cl(Z)$-grading of the Cox ring 
(see~\cite[Chap. I, \S 6]{ArDeHaLa}).

In what follows we will 
denote by $Z_F$ the locus of 
factorial points of $Z$, that is
points $z\in Z$ such that the
local ring $\Osh_{Z,z}$ is
a unique factorization domain.
Observe that since $Z$ is normal,
then $Z_F$ is big in $Z$.
Let $X$ be a closed irreducible 
subvariety of $Z$ 
and let $i: X\to Z$ be the inclusion
map. We define $\hat{X} := p_Z^{-1}(X)$ 
and let $\bar{X}$ be the closure 
of $\hat X$ in $\bar Z$. 
We have the following 
commutative diagram:
\[
 \xymatrix{
  \bar X\ar[r]& \bar Z \\
  \hat X\ar[r]\ar[u]\ar[d]_{p_X}
  & \hat Z\ar[u]\ar[d]^{p_Z}
  & \hat{Z}_F\ar[l]\ar[d]^-{p_{Z_F}}\\
  X\ar[r]^{i} & Z 
  & Z_F\ar[l],
  }
\]
where $p_X$ and $p_{Z_F}$ are 
restrictions of $p_Z$, while all the
remaining maps are inclusions.
In order to define a pull-back 
map on Weil divisors of $Z$ we 
will need the following assumption.
\begin{definition}
The embedding $i: X\to Z$ is {\em good}
if $i^{-1}(Z_F)$ is big in $X$. 
\end{definition}
Assume that $i$ is good and 
let $D=\sum_ia_iD_i$ be a 
Weil divisor of $Z$, where 
$D_i$ are prime divisors 
and $a_i$ are positive integers.
 Let $D\cap Z_F$ be the
restriction of $D$ to $Z_F$ 
defined as $\sum_ia_i(D_i\cap Z_F)$.
Observe that $D\cap Z_F$ 
is a Cartier divisor 
since $Z_F$ is factorial.
Thus we define the pull-back 
of $D$ via $i^*$ to be:
\[
  i^*(D)
  :=
  \overline{i^*(D\cap Z_F)},
\]
where the overline denotes
the closure of the corresponding 
Cartier divisor in $X$. This closure
is unique due to the assumption that 
$i^{-1}(Z_F)$ is big in $X$.

The pull-back $i^*$ defined 
between the groups of Weil 
divisors induces a pull-back map 
between the class groups of $Z$ 
and $X$ (that will be denoted 
with the same symbol).
Such map can be obtained as follows. 
Observe that
$\Cl(Z)\cong\Cl(Z_F)\cong\Pic(Z_F)$, 
where the first isomorphism 
is due to the
fact that $Z_F$ is big in $Z$ and the
second to the fact that 
$Z_F$ is factorial.
The same holds by substituting 
$Z$ with $X$ 
and $Z_F$ with $i^{-1}(Z_F)\cap X_F$,
where $X_F$ is the factorial
locus of $X$.
This allows to define a pull-back map
$i^*:\Cl(Z)\to\Cl(X)$
by means of the following 
commutative diagram:
\[
 \xymatrix{
 \Cl(Z)\ar[d]_{\cong} \ar[r]^{i^*}&
 \Cl(X)\\
 \Pic(Z_F)\ar[r]
 & \Pic(i^{-1}(Z_F)\cap X_F)
 \ar[u]_{\cong},
 }
\]
where the lower horizontal arrow 
is induced by the usual pull-back 
of Cartier divisors, which clearly 
respects linear equivalence. 
In what follows, we define 
$D_X := i^*(D)$.

Consider now the class $w$ of
a divisor $D\in K$.
Given a non-zero element 
$f\in{\R(Z)}_w$ there exists 
a unique $\tilde f\in\mathcal S(Z)_D$ 
which is projected to $f$
via the quotient map
${\mathcal S}(Z)\to\R(Z)$
(see \cite[Chapter I, \S 3]{ArDeHaLa}).
\begin{definition}
With the same notation as above,
we say that an effective 
Weil divisor 
$E$ is {\em defined by} 
$f\in{\R(Z)}_w$
if $E={\rm div}(\tilde f)+D$.
Moreover, we will denote
by $\bar E$ the Cartier
divisor of $\bar Z$,
which is defined 
by the zero locus of 
the same $f$ thought 
as a regular function
on $\bar Z$.
\end{definition}

The following definition is just a reformulation
of~\cite[Def. 2.5]{Hau}.

\begin{definition}\label{neat} 
Let $Z$ be a Mori dream space 
and $X\subset Z$ 
be a normal, irreducible closed
subvariety. The inclusion 
$i: X\to Z$ is a 
{\em neat embedding} if
\begin{enumerate}
\item $i$ is good;
\item the pull-back 
$i^*:\Cl(Z)\to \Cl(X)$ 
is an isomorphism.
\end{enumerate}
\end{definition}

\begin{remark} In the definition of 
neat embedding given 
in~\cite[Def. 2.5]{Hau} $Z$ is 
a toric variety and 
point i) is replaced by the 
requirement that the divisors 
$D_X^k := i^*(D^k)$ are 
distinct and irreducible,
where the $D^k$ are divisors
defined by a minimal set
of generators
$\{f_1,\dots,f_k\}$ 
of the Cox ring of $Z$.
It is not hard to show
that this condition
implies that $i$ is good.
\end{remark}

The proof of the following theorem
is essentially the same as that 
of~\cite[Theorem 2.6]{Hau} 
in case the ambient variety 
$Z$ is a Mori dream space.
\begin{theorem}\label{profactor} 
Let $Z$ be a Mori dream space 
and $X\subset Z$ be a normal, 
irreducible closed subvariety. 
If $X\subset Z$ is a neat embedding and 
$Z$ is factorial, then there is 
an isomorphism of $K$-graded 
$\mathcal O_X$-algebras:
\[
  \mathcal R_X\cong 
  (p_X)_*\mathcal O_{\hat X},
\]
where $\mathcal R_X$ is any Cox sheaf 
on $X$. Moreover, $\hat X$ is normal 
and $p_X:\hat X\to X$ is a characteristic 
space for $X$.
\end{theorem}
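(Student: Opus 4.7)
The plan is to mirror Hausen's argument for \cite[Theorem 2.6]{Hau}, with factoriality of $Z$ playing the role that smoothness of the toric ambient plays in his setting: it guarantees every Weil divisor on $Z$ is Cartier, so that the standard identification $(p_Z)_*\Osh_{\hat Z}\cong \R_Z$ as $K$-graded $\Osh_Z$-algebras has each graded piece literally equal to the invertible sheaf $\Osh_Z(D)$. The goal is to transport this identification along $i$ and show that the resulting $\Osh_X$-algebra is a Cox sheaf on $X$.

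First, I would transfer the defining data of a Cox sheaf from $Z$ to $X$. Fix a subgroup $K\subset\WDiv(Z)$ surjecting onto $\Cl(Z)$ together with a character $\chi:K^0\to\kk(Z)^*$. Because $i$ is good, the pull-back $i^*K\subset \WDiv(X)$ is well defined as described before Definition~\ref{neat}, and because $i^*:\Cl(Z)\to\Cl(X)$ is an isomorphism by the neat hypothesis, the image $i^*K$ still surjects onto $\Cl(X)$. Rational functions on $Z$ restrict to rational functions on the big open set $U:=i^{-1}(Z_F)\cap X_F\subset X$, so $\chi$ pulls back to a character $i^*\chi$ on $(i^*K)^0$. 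These choices assemble a candidate Cox sheaf $\R_X$ on $X$.

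The core step is producing a $K$-graded $\Osh_X$-algebra isomorphism $\R_X\cong (p_X)_*\Osh_{\hat X}$. On $U$, pulling back the component-wise isomorphism $(p_Z)_*\Osh_{\hat Z}\cong \R_Z$ along the inclusion, and using that pull-back along $i|_U$ is honest Cartier-divisor restriction, yields an isomorphism of $K$-graded $\Osh_U$-algebras $(p_X)_*\Osh_{\hat X}|_U\cong \R_X|_U$. Both sides are reflexive $\Osh_X$-modules on the normal variety $X$ (the graded pieces being of the form $\Osh_X(D_X)$, and the push-forward of the structure sheaf along a quasi-affine map to a normal base), so this isomorphism over $U$ extends uniquely across the codimension $\geq 2$ complement $X\setminus U$.

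Finally, normality of $\hat X$ follows from the identification $\hat X\cong \Spec_X\R_X$ together with the general fact that the relative spectrum of a Cox sheaf on a normal variety is normal \cite[Chap.~I, \S4]{ArDeHaLa}; and $p_X:\hat X\to X$ inherits the characteristic-space property from $p_Z$ because $\hat X\subset \hat Z$ is $H$-invariant closed and good quotients restrict to good quotients along such subsets. The step I expect to be the main obstacle is the character bookkeeping: one must verify that $i^*\chi$ really is the character attached to $(i^*K)^0$, i.e.\ that $\operatorname{div}_X(i^*\chi(D))=D_X$ for every $D\in K^0$. This is precisely the point where the good-embedding hypothesis is essential, since it ensures that principal divisors on $Z_F$ restrict without spurious components along $i$, after which the identification propagates to all of $X$ by bigness of $U$ and normality of $X$.
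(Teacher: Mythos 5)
The paper itself does not write out a proof of Theorem~\ref{profactor}: it only asserts that the argument of Hausen's Theorem 2.6 carries over when the smooth toric ambient variety is replaced by a factorial Mori dream space. Your proposal follows that same strategy --- pull back the defining data $(K,\chi)$ of a Cox sheaf, compare with $(p_X)_*\Osh_{\hat X}$ over a big open set, and extend --- so in outline it is the intended argument. There is, however, a genuine gap in the extension step, and it comes from underusing the factoriality hypothesis. You justify extending the isomorphism from $U$ to $X$ by saying both sides are reflexive, with $(p_X)_*\Osh_{\hat X}$ reflexive because it is ``the push-forward of the structure sheaf along a quasi-affine map to a normal base.'' Normality of the \emph{base} does not give this; you need $\hat X$ itself to be normal (or at least reduced and $S_2$), and you also need $p_X^{-1}(U)$ to be big in $\hat X$. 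But normality of $\hat X$ is one of the conclusions, and you only derive it at the very end from the isomorphism you are trying to prove --- a circularity.

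The fix, which is where Hausen's argument actually leans on the hypothesis on the ambient space, is to establish the geometry of $p_X$ first. Factoriality of $Z$ gives more than ``Weil equals Cartier'': every local class group $\Cl(Z,z)$ is trivial, so all isotropy groups of the $H$-action on $\hat Z$ are trivial and $p_Z$ is an $H$-torsor, \'etale-locally trivial over $Z$. Restricting, $p_X:\hat X\to X$ is \'etale-locally $X\times H\to X$; this yields at once that $\hat X$ is reduced and normal, that its fibers are equidimensional (so $p_X^{-1}(U)$ is big in $\hat X$ whenever $U$ is big in $X$), and that the base-change map $i^*(p_Z)_*\Osh_{\hat Z}\to (p_X)_*\Osh_{\hat X}$ you implicitly use is an isomorphism. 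With these facts in hand, your reflexivity argument, the character bookkeeping (which, as you say, is routine once principal divisors restrict correctly on the factorial locus), and the final identification of $p_X$ as a characteristic space all go through. So the proposal has the right skeleton, but the order must be reversed: normality and local triviality of $\hat X$ come first, not last.
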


\section{Hypersurfaces in Mori dream spaces}
We will now specialize the 
results of the previous section 
to the case when $X$ is an 
irreducible 
closed hypersurface in $Z$. 
In what follows we will 
assume the inclusion $i:X\to Z$ 
to be good, so that the 
pull-back of Weil divisors 
is well defined.
Observe that the inclusion 
$i$ induces a pull-back 
homomorphism
\[
 i_{\mathcal R}:
 \mathcal R(Z)\to
 \mathcal R(X).
\]
We recall that $Z_F$
is the factorial locus of $Z$ and $\hat Z_F=p_Z^{-1}(Z_F)$.
We will denote by 
$U_X:=Z_F\cap X$ and by
$\hat U_X:=p_X^{-1}(U_X)$. 
\begin{theorem}\label{hyp}
Let $Z$ be a Mori dream space 
and let $X$ be a normal,
irreducible closed hypersurface
of $Z$ defined by $f\in\R(Z)_w$
such that the inclusion
$i:X\to Z$ is good.
Then $i_{\mathcal R}$ induces an 
isomorphism  
$\R(Z)/(f)\cong\R(X)$
if and only if the following 
conditions hold:
\begin{enumerate}
\item $\hat X$ is big in $\bar X$,
\item $\hat U_X$  is big in $\hat X$,
\item $i^*:\Cl(Z)\to \Cl(X)$ 
is an isomorphism.
\end{enumerate}
\end{theorem}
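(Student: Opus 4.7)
The plan is to factor the natural map $i_{\mathcal R}\colon\R(Z)/(f)\to \R(X)$ through a chain of restriction and extension morphisms, realizing both sides as graded global sections of the structure sheaf on a common scheme. The grading is $\Cl(X)$ on the right and $\Cl(Z)$ on the left, transported to each other via $i^{*}$.

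Concretely, I would build the chain of graded ring maps
\[
\R(Z)/(f) \xrightarrow{(a)} \Gamma(\bar X,\Osh_{\bar X}) \xrightarrow{(b)} \Gamma(\hat X,\Osh_{\hat X}) \xrightarrow{(c)} \Gamma(\hat U_X,\Osh_{\hat U_X}) \xrightarrow{(d)} \R(X),
\]
and show each arrow is an isomorphism under conditions (i)--(iii). Arrow $(d)$ is obtained by applying Theorem~\ref{profactor} at the factorial big open: the restricted embedding $i|_{U_X}\colon U_X\to Z_F$ is neat in the sense of Definition~\ref{neat}, since condition (iii) combined with the identifications $\Cl(Z)\cong\Pic(Z_F)$ and $\Cl(X)\cong\Pic(i^{-1}(Z_F)\cap X_F)$ from Section~1 gives an isomorphism on class groups, and goodness of $i$ gives $\R(X)=\R(U_X)$ since $U_X$ is big in $X$. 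Arrows $(b)$ and $(c)$ are extensions of sections across closed subsets of codimension $\geq 2$: $(c)$ uses condition (ii) together with the normality of $\hat X$ coming from the characteristic space property, while $(b)$ uses condition (i) together with the normality of $\bar X$, which in turn follows from $\bar X=V(f)$ being the zero locus of the prime element $f$ inside the normal affine variety $\bar Z=\Spec\R(Z)$. Arrow $(a)$ is then tautological, since $\bar Z$ is affine.

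For the necessary direction, I would run the chain in reverse. Arrow $(d)$ holds without invoking (i) or (ii), so if the composite $i_{\mathcal R}$ is an isomorphism, then each of $(a)$, $(b)$, $(c)$ must also be an isomorphism; any failure of bigness in (i) or (ii) would produce strictly more sections upstream and destroy surjectivity of the corresponding arrow. Condition (iii) is forced because $i_{\mathcal R}$ is graded along $i^{*}$, and an isomorphism of Cox rings pins $i^{*}$ down to be an isomorphism of class groups.

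The main obstacle I anticipate is making arrow $(d)$ precise when $Z$ is not itself factorial: Theorem~\ref{profactor} is stated for factorial Mori dream ambients, whereas $Z_F$ is factorial but not in general a Mori dream space, so one must extract the underlying local content of that theorem rather than apply its global statement. A secondary delicate point is the scheme-theoretic equality $\bar X=V(f)$, which relies on $f$ being prime in $\R(Z)$ — a consequence of the irreducibility of $X$, provided the defining section is taken to be reduced.
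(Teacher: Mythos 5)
Your sufficiency argument coincides with the paper's: the same chain $\R(Z)/(f)=\Gamma(\bar X,\Osh)=\Gamma(\hat X,\Osh)=\Gamma(\hat U_X,\Osh)=\R(U_X)=\R(X)$, with your arrow $(d)$ obtained exactly as in the paper by applying Theorem~\ref{profactor} to the neat embedding $U_X\to Z_F$ (and the subtlety you flag, that $Z_F$ is factorial but not a projective Mori dream space, is one the paper also passes over). There is, however, one concretely false justification in your write-up: you claim $\bar X$ is normal because it is the zero locus of the prime element $f$ in the normal affine variety $\bar Z$. A prime element of a normal domain can cut out a non-normal hypersurface ($y^2-x^3$ in $\cc[x,y]$), so normality of $\Spec\R(Z)/(f)$ does not follow from primality of $f$. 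What arrow $(b)$ actually requires is that regular functions extend across the codimension-$\geq 2$ closed set $\bar X\setminus\hat X$, i.e.\ an $S_2$-type property of $\R(Z)/(f)$; the paper asserts this extension directly from bigness without the normality detour, so you must either do the same or establish normality (or $S_2$) of $\bar X$ by some other means.

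For the converse the paper takes a different and shorter route: from $\R(X)\cong\R(Z)/(f)$ it deduces that $p_X:\hat X\to X$ is a characteristic space, hence $\hat X$ is big in $\bar X$ and $p_X$ contracts no divisors, and (ii) then follows from goodness of $i$. Your alternative --- run the chain backwards and claim that any failure of bigness "would produce strictly more sections upstream" --- is not automatic as stated: removing a divisor from a normal variety that is not affine need not change $\Gamma(\cdot,\Osh)$ (remove the exceptional curve from the blow-up of $\pp^2$ at a point). To repair it, first note that all arrows are injective restrictions along dense opens, so the composite being an isomorphism forces $\Gamma(\bar X,\Osh)=\Gamma(\hat X,\Osh)=\Gamma(\hat U_X,\Osh)=A$ with $A:=\R(Z)/(f)\cong\R(X)$ normal (being a Cox ring); then a prime divisor $V(\mathfrak p)$ contained in $\bar X\setminus\hat U_X$ would give, by the irredundance of the essential valuations of the Krull domain $A$, a strict inclusion $A\subsetneq\bigcap_{\mathfrak q\neq\mathfrak p}A_{\mathfrak q}\subseteq\Gamma(\hat U_X,\Osh)$, a contradiction. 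Without this lemma (or the paper's non-contraction argument) the necessity of (i) and (ii) is not actually proved.
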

\begin{proof} 
Assume first that the 
three conditions (i), (ii)
and (iii) hold.
Consider the following 
commutative diagram:
\[
 \xymatrix{
  \hat U_X\ar[r]\ar[d]_{p_X}
  &  \hat Z_F \ar[d]^{p_Z}\\
  U_X\ar[r]_{i} & Z_F.
  }
\]
Since $Z_F$ is factorial, 
we can apply 
Theorem~\ref{profactor} to 
the inclusion map $i$.
Observe that $i$ is neat 
by (iii), $Z_F$ is big in 
$Z$ and $U_X$ is big 
in $X$ since $i$ is good.
Thus we obtain a sheaf 
isomorphism 
$\mathcal R_{U_X}\cong 
(p_X)_*\mathcal O_{\hat U_{X}}$.
Since  $\hat U_{X}\subset \hat X$ 
and $\hat X\subset \bar X$ 
are big inclusions by 
(i) and (ii), then:
\[
 \mathcal R(X)
 =\Gamma(X,\mathcal R)
 =\Gamma(U_X,\mathcal R)
 =\Gamma(\hat U_{X},\mathcal O)
 =\Gamma(\hat X,\mathcal O)
 =\Gamma(\bar X,\mathcal O),
\]
where the first equality is 
by definition, the second 
fourth and fifth are due to
the fact that the corresponding
inclusions of subsets are big,
and finally the third equality
is due to the sheaf isomorphism
given above.
The last ring is isomorphic 
to $\R(Z)/(f)$ and the 
isomorphism is induced 
by $i_{\mathcal R}$.

Conversely, if $i_\R$ induces
an isomorphism
$\mathcal R(Z)/(f)\cong\mathcal R(X)$,
then $\mathcal R(X)$ is graded 
by $i^*\Cl(Z)$, which is thus 
isomorphic to $\Cl(X)$.
Moreover $p_X:\hat X\to X$ 
is a characteristic space, 
so that $\hat X$ is big in 
$\bar X$ and $p_X$ does not 
contract divisors 
by~\cite[Chapter I, \S6]{ArDeHaLa}.
The latter property 
and the fact that $U_X$ 
is big in $X$ imply 
that $\hat{U}_X$ 
is big in $\hat X$.
\end{proof}
\begin{remark}
Let $\{f_1,\dots,f_k\}$ be a minimal 
set of generators of $\mathcal R(Z)$ 
and $\bar D^j$'s be the zero sets 
of the $f_j$'s in $\bar Z$.
Conditions (i) and (ii) 
in Theorem \ref{hyp} 
can be given in terms 
of the divisors 
$ i^*(\bar D^j)$ of $\bar X$: 
if  such divisors  
are $\Cl(X)$-prime and distinct, 
then conditions (i) and (ii) in hold.
We recall that a Weil 
divisor on $\bar X$ 
is called $\Cl(X)${\em -prime} 
if it is a finite sum of 
distinct prime divisors 
which are 
transitively permuted by 
the action of the quasi-torus
$H_X$
(see~\cite[Chapter I, \S 4]{ArDeHaLa}).
In fact, it can be proved 
that the complement 
$V\subset\bar Z$ of all the 
intersections $\bar{D}^i\cap \bar{D}^j$,
with $i$ and $j$ distinct,
is contained in $\hat Z_F$ 
and that $V\cap\hat X$ 
is big in $\bar X$  
because of the hypotheses 
on the divisors $i^*(\bar D^j)$'s.
This implies that $V\cap \hat X$
is big in $\hat X$ and that 
$\hat X$ is big in $\bar X$, 
giving (ii) and (i) respectively.
\end{remark}

Consider now an ample
and spanned class $w\in\Cl(Z)$
and let $X$ be the effective
divisor defined by a 
general $f\in\R(X)_w$. 
Given such an $w$,
we will denote by 
$\varphi_{w}:Z\to\pp^n$
the morphism 
defined by the complete 
linear series $|w|$.

\begin{corollary}\label{hyp-sec}
Let $Z$ be a Mori 
dream space 
of dimension $\geq 3$
and let $X$ be 
a closed hypersurface 
of $Z$ defined by 
a general $f\in\R(Z)_w$, 
where $w$ is an ample 
and spanned class of $\Cl(Z)$.
If $\dim(Z)=3$ we also assume 
that $f$ is very general 
and that 
${(\varphi_{w}})_*K_Z(1)$ 
is spanned.
Then the following 
are equivalent:
\begin{enumerate}
\item $\hat X$ is big in $\bar X$.
\item $i_{\mathcal R}$ 
induces an isomorphism  
$\R(Z)/(f)\cong\R(X)$; 
\end{enumerate}
\end{corollary}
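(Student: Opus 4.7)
The direction (ii) $\Rightarrow$ (i) is immediate from Theorem~\ref{hyp}, so the content of the corollary lies in the reverse implication. Assuming $\hat X$ is big in $\bar X$, the plan is to verify the remaining hypotheses of Theorem~\ref{hyp}: namely, that $i$ is good, that $\hat U_X$ is big in $\hat X$, and that $i^*:\Cl(Z)\to\Cl(X)$ is an isomorphism. Then Theorem~\ref{hyp} will deliver the isomorphism $\R(Z)/(f)\cong\R(X)$.

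The first step is goodness of $i$. Since $Z$ is normal, $Z\setminus Z_F$ has codimension at least two in $Z$. Because $w$ is ample and spanned, a general member of $|w|$ meets this closed set properly, so $X\setminus U_X$ has codimension $\geq 2$ in $X$ by a standard Bertini-type dimension count using basepoint-freeness. Hence $U_X$ is big in $X$ and $i$ is good. The second step is to transfer this bigness up to $\hat X$: since $p_X:\hat X\to X$ is (the restriction of) a good quotient by the quasi-torus $H$ and therefore has equidimensional fibers, preimages of closed subsets preserve codimension. Combined with the first step this yields that $\hat U_X = p_X^{-1}(U_X)$ is big in $\hat X$, which is condition (ii) of Theorem~\ref{hyp}.

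The principal step, and the place where the dimension hypothesis enters, is verifying that $i^*:\Cl(Z)\to\Cl(X)$ is an isomorphism. When $\dim Z\geq 4$, this is a Grothendieck--Lefschetz / Ravindra--Srinivas type statement: passing to the factorial loci (where $\Cl$ coincides with $\Pic$) and working with a general section of an ample basepoint-free system, the restriction map on class groups is an isomorphism. When $\dim Z=3$, the hypersurface $X$ is a surface and the naive Lefschetz argument fails; here one needs a Noether--Lefschetz type theorem to ensure that a very general $X\in|w|$ has $\Cl(X)$ generated by restrictions from $Z$. The hypothesis that $(\varphi_{w})_*K_Z(1)$ is spanned is precisely the positivity condition that feeds such a theorem (in the spirit of the generalizations by Green, Lopez, and others), while taking $f$ very general allows one to avoid the countable union of Noether--Lefschetz loci in $|w|$.

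The routine parts are the Bertini and good-quotient manipulations that give goodness of $i$ and bigness of $\hat U_X$ in $\hat X$. The main obstacle is the $\dim Z=3$ case of (iii): one must either invoke or adapt a Noether--Lefschetz theorem valid for (possibly singular) Mori dream spaces, and verify that the spannedness assumption on $(\varphi_{w})_*K_Z(1)$ is strong enough to trigger the required vanishing behind it.
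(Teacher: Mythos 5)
Your overall strategy coincides with the paper's: show (ii) $\Rightarrow$ (i) is contained in Theorem~\ref{hyp}, then verify hypotheses (i)--(iii) of that theorem for a general member of $|w|$, using Bertini for goodness and normality and the Ravindra--Srinivas Grothendieck/Noether--Lefschetz theorems for the class group isomorphism (the spannedness of $(\varphi_w)_*K_Z(1)$ and the ``very general'' hypothesis are exactly what \cite{RaSr2} requires in dimension $3$, so there is nothing to ``adapt''). However, your justification that $\hat U_X$ is big in $\hat X$ contains a genuine error: you claim that $p_X$, being the restriction of a good quotient, ``has equidimensional fibers, so preimages of closed subsets preserve codimension.'' Good quotients by the characteristic quasi-torus do \emph{not} have equidimensional fibers in general; the fiber dimension typically jumps over the non-factorial locus $Z\setminus Z_F$, which is precisely the locus at issue. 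If your claim were true, condition (ii) of Theorem~\ref{hyp} would follow from goodness of $i$ alone and would not need to be listed as a separate hypothesis there.

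The correct argument, as in the paper, stratifies $Z\setminus Z_F$ into locally closed irreducible pieces $B$ over which the fibers of $p_Z$ have constant dimension $d$. Since $p_Z$ contracts no divisor and $Z_F$ is big, one has $\dim B + d \le \dim\hat Z - 2$; this bound alone is not enough to conclude for $\hat X$, because a priori $p_X^{-1}(B\cap X)$ could be a divisor in $\hat X$ if $X$ contained $B$. It is the genericity of $X$ together with spannedness of $w$ that forces $B\cap X$ to have codimension one in $B$, whence $\dim(B\cap X)+d \le \dim\hat X - 2$. So the genericity of $f$ is used a second time, in an essential way, at exactly the step you tried to dispatch by a general topological principle. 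With that stratification argument substituted in, your proof matches the paper's.
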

\begin{proof}
We have already seen
that (ii) implies (i).
So we now show that
(i) implies (ii).
Since $w$ is ample
and spanned, and $f$ 
is general in its 
Riemann Roch space, 
then $X$ is irreducible 
and normal by  
Bertini's first theorem and 
~\cite[Theorem 1.7.1]{BeSo}.
The genericity assumption 
on $f$ and the fact 
that $w$ is spanned 
imply that $i$ is good.
Finally, the restriction map 
$i^*: \Cl(Z)\to\Cl(X)$
is an isomorphism 
by the generalized 
Lefschetz hyperplane 
theorems~\cite[Theorem 1]{RaSr1}
and~\cite[Theorem 1]{RaSr2}.

We now show that 
condition (ii) of 
Theorem~\ref{hyp} holds. 
Recall that the factorial 
locus $Z_F$ is big 
in $Z$. Since $p_Z$ 
does not contract divisors,
then $\hat{Z}_F$ is big 
in $\hat Z$.
Consider an irreducible,
not necessarily closed, 
subvariety $B$ of $Z-Z_F$, 
which intersects $X$, and
such that all the fibers 
of $p_Z$ over $B$
have the same dimension $d$.
Then $\dim(p_Z^{-1}(B))
=\dim(B)+d\leq \dim(\hat Z)-2$. 
Since $X$ is general and 
$w$ is spanned, 
the intersection
$B\cap X$ has codimension
one in $B$.
Thus 
\[
 \dim(p_X^{-1}(B\cap X))
 =
 \dim(B\cap X)+d
 \leq
 \dim(\hat X)-2.
\]
Whence $\hat{U}_X=
p_X^{-1}(Z_F\cap X)$ 
is big in $\hat X$ and
the result follows 
from Theorem \ref{hyp}.
\end{proof}

\begin{remark}\label{smooth}
In \cite[Theorem 6]{Jow10}
Shin-Yao Jow proved that if   
$Z$ is a smooth Mori dream 
space of dimension $\geq 4$ 
such that 
\begin{equation}\label{sm2}
V(\I(Z))=\bar Z-\hat Z
\text{ has codimension }
\geq 3 \text{ in } 
\bar Z,
\end{equation}
then every smooth ample 
divisor $X\subset Z$ 
is a Mori dream space
such that,
via the restriction map 
$\Pic(Z)_{\rr}=\Pic(X)_{\rr}$, 
the nef cones of $Z$
and that of $X$ coincide
and each Mori chamber 
of $X$ is a union of  
Mori chambers of $Z$.

We observe that, 
under these hypotheses, 
condition (i) of 
Theorem~\ref{hyp} 
clearly holds, 
and condition (iii) 
is given by the classical 
Lefschetz hyperplane Theorem. 
Thus Theorem~\ref{hyp} 
states that
$i_\R$ induces an isomorphism
$\R(Z)/(f)\cong\R(X)$ 
if and only if 
$\hat X$ is big in $\bar X$.
The latter condition 
is equivalent to~\eqref{sm2}.
In fact, since the class 
of $X$ in $\Cl(Z)$ is ample, 
then the irrelevant 
locus $V(\I(Z))$ 
is contained in $\bar X$ 
and equals 
$\bar X-\hat X$, 
so that it has codimension 
$\geq 3$ in $\bar Z$ 
if and only if it has 
codimension $\geq 2$ 
in $\bar X$.
\end{remark}

\begin{remark}\label{nonnef}
The condition $X$ ample 
in $Z$ is not necessary
to have $\R(X)\cong\R(Z)/(f)$.
For example consider 
the smooth toric fourfold $Z$
whose Cox ring 
$\R(Z)\cong\cc[x_1,\dots,x_6]$
has grading matrix and
irrelevant ideal:
\[
 \left[
 \begin{matrix}
 1&1&-1&1&-1&0\\
 0&0&1&1&1&1
 \end{matrix}
 \right]
 \qquad
 \I(Z) = (x_1,x_2)\cap (x_3,x_4,x_5,x_6).
\]
If we put $w_i:=\deg(x_i)$, then
the movable and the nef cone
of $Z$ are $\Mov(Z)=\langle w_1,w_3\rangle$
and $\Nef(Z)=\langle w_1,w_4\rangle$,
see~\cite[Propositions 3.2.3 and 3.2.6]{ArDeHaLa}.
Let 
\[
 f:=x_1x_2x_5^2+x_3x_4+x_6^2
\]
be in $\Gamma(Z,\Osh_Z(D))$
and let $X:=(f)+D$ be the 
prime divisor of $Z$ defined by
$f$. By the Samuel criterion~\cite{Sa}
the quotient ring $R:=\R(Z)/(f)$ 
is factorial, by choosing the
$\zz$-grading $(1,2,2,3,1)$ on the
first five variables. If we
denote by 
\[
 \bar{X} := V(f)
 \qquad
 \hat{X} := \Bar{X}-V(\I(Z)),
\]
then the restriction
$p_{X}: \hat{X}\to X$ of
the characteristic map
$p_{Z}: \hat{Z}\to Z$
is a geometric quotient with respect
to the action of $H:=(\cc^*)^2$.
Looking at $f$ and $\I(Z)$
we see that $\hat{X}$ is big
in $\Bar{X}$. This implies that
$\hat{X}$ does not admit
invertible global regular functions.
Moreover $H$ acts freely on $\hat{Z}$,
and consequently on $\hat{X}$,
since $Z$ is smooth, so that
the action of $H$
is strongly stable in the sense
of~\cite[Definition 6.4.1]{ArDeHaLa}).
Hence the quotient
map $p:\hat{X}\to X$ is a characteristic 
space by~\cite[Theorem 6.4.3]{ArDeHaLa},
so that $R$ is isomorphic to the Cox
ring of $X$.
We conclude by observing that
the class $w_3$ of $X$ in $\Cl(Z)$
is not nef.
\end{remark}

\section{Calabi-Yau threefolds in smooth 
toric Fano varieties}
We apply results of the previous section
to the case $X\subset Z$, where $Z$ is a 
smooth toric Fano variety and $X$ is a
a smooth hyperplane section of $Z$ in the
anticanonical embedding. Thus $X$ is a
Calabi-Yau threefold.
We will often make use of 
the Magma database of smooth
toric Fano varieties.
To check any Magma calculation
contained in this paper follow
these steps:
\begin{itemize}\leftskip -5mm
\item open this page: 
\url{http://www2.udec.cl/~alaface/software/T-Fano.txt}
and copy its content into 
the online Magma calculator
located here:\\
\url{http://magma.maths.usyd.edu.au/calc};
\item paste in the same window, below
the previous text, the function occurring 
in the calculation.
\end{itemize}
The result will be the
output of the corresponding
Magma calculation done
in the paper.
%In order to access it
%the following functions
%are needed:
%\url{http://www2.udec.cl/~alaface/software/T-Fano.txt}.
All the software sessions
in this section are in 
Magma code~\cite{Magma}.

\begin{theorem}\label{cy}
Let $X$ be a smooth Calabi-Yau
threefold which is hyperplane
section of a smooth toric Fano variety $Z$.
Then $\R(X)\cong\R(Z)/(f)$ if and only
if $Z$ is one of the following:\\

\begin{center}
\begin{tabular}{c|c|c}
\toprule
$Z$ & Grading matrix
& Irrelevant ideal\\
\midrule
$\pp_{\pp^2}(\Osh\oplus\Osh\oplus\Osh(2))$ &
$\left[\begin{matrix}
  0& 0& 1& 1& 0& 1\\
  1& 1& 2& 0& 1& 2
\end{matrix}
\right]$ &
$(x_1,x_2,x_5)\cap (x_3,x_4,x_6)$ \\[12pt]
$\pp_{\pp^2}(\Osh\oplus\Osh(1)\oplus\Osh(1))$ &
$\left[\begin{matrix}
    0& 0& 1& 1& 0& 1\\
    1& 1& 0& 0& 1& 1
\end{matrix}
\right]$ &
$(x_1,x_2,x_5)\cap (x_3,x_4,x_6)$ \\[12pt]
$\pp_{\pp^2}(\Osh\oplus\Osh\oplus\Osh(1))$ &
$\left[\begin{matrix}
    0& 0& 1& 1& 0& 1\\
    1& 1& 1& 0& 1& 1
\end{matrix}
\right]$ &
$(x_1,x_2,x_5)\cap (x_3,x_4,x_6)$ \\[12pt]
$\pp^2\times\pp^2$ &
$\left[\begin{matrix}
    0& 0& 1& 1& 0& 1\\
    1& 1& 0& 0& 1& 0
\end{matrix}
\right]$ & 
$(x_1,x_2,x_5)\cap (x_3,x_4,x_6)$ \\[10pt]
$\pp^4$ &
$\left[\begin{matrix}
    1& 1& 1& 1& 1
\end{matrix}
\right]$ &
$(x_1,\dots,x_5)$ \\[5pt]
\bottomrule
\end{tabular}
\end{center}
%Here $L$ is a line in both cases,
%$Q$ is the four dimensional 
%quadratic cone with vertex $L$
%and $V(a,b):=\Osh\oplus\Osh(a)\oplus\Osh(b)$.
\end{theorem}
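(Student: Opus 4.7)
The plan is to reduce the theorem to a finite combinatorial check over the classified smooth toric Fano fourfolds, and then run that check using the Magma database cited in the paper.

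The key input is Remark \ref{smooth}. Since $X$ is a smooth ample divisor in a smooth Mori dream space $Z$ of dimension $4$, the remark gives that $\R(X)\cong\R(Z)/(f)$ holds if and only if $V(\I(Z))$ has codimension at least $3$ in $\bar{Z}$. For a smooth toric variety with fan $\Sigma$ on rays $\rho_1,\dots,\rho_n$, the minimal primes of the irrelevant ideal are precisely the ideals $(x_{i_1},\dots,x_{i_k})$ indexed by the primitive collections of $\Sigma$ (the minimal subsets of rays that do not span a cone of $\Sigma$). Hence the codimension of $V(\I(Z))$ in $\bar Z$ equals the minimum cardinality of a primitive collection of $\Sigma$. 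Thus the theorem is equivalent to the following assertion: a smooth toric Fano fourfold $Z$ has all primitive collections of size at least $3$ if and only if $Z$ is one of the five varieties listed.

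The smooth toric Fano fourfolds were classified by Batyrev and Sato into $124$ isomorphism classes, which are stored in the Magma database referenced in the paper. My plan is to loop over this list, and for each $Z$ compute the primitive collections of the fan, test whether the minimum size is at least $3$, and retain those that pass. For each surviving variety, I would then extract the ray matrix, which gives the grading matrix of $\R(Z)$, and the primitive collections, which give the generators of $\I(Z)$, and check that the outcome matches the five entries in the table.

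The main obstacle is purely computational: the classification step depends on an enumeration of a large but finite list rather than on a new theoretical argument, and the credibility of the answer rests on the smooth toric Fano database and on the primitive-collection routine. Beyond Remark \ref{smooth} and this toric dictionary, the only further facts needed are standard: a general $X\in|-K_Z|$ is smooth by Bertini (using that $-K_Z$ is ample, hence some multiple is spanned) and is Calabi-Yau by adjunction, since $K_X=(K_Z+X)|_X=0$.
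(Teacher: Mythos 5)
Your proposal is correct and follows essentially the same route as the paper: both reduce the statement via Remark~\ref{smooth} to the condition that $V(\I(Z))$ has codimension at least $3$ in $\bar Z$, and then verify this by a finite search through the Magma database of smooth toric Fano fourfolds. Your reformulation in terms of the minimum size of a primitive collection is just the toric dictionary for the codimension computation the paper performs directly with \texttt{Dimension(IrrelevantIdeal(...))}; the only piece you leave implicit is the identification of the five survivors with the explicit projective models in the first column of the table, which the paper carries out by computing anticanonical degrees and matching against Batyrev's classification.
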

\begin{proof}
First we show that, by looking
into the Magma database of smooth 
toric Fano varieties, there are
exactly five such varieties whose 
Cox ring admits an irrelevant ideal
of codimension at least $3$.

\begin{verbatim}
> [n : n in [24..147] | 
  Length(FanoX(n))-Dimension(IrrelevantIdeal(FanoX(n))) ge 3];
[ 44, 70, 141, 146, 147 ]
\end{verbatim}

Each of these varieties, let us say $Z$,
satisfies the hypothesis of 
Theorem~\ref{hyp} by Remark~\ref{smooth},
since
$-K_Z$ is very ample and in particular
it is ample and spanned.
Hence any smooth element $X$ of 
the linear series $|-K_Z|$
has Cox ring isomorphic to 
$\R(Z)/(f)$, where $f\in H^0(Z,K_Z)$
is a defining section for $X$
in $Z$.

To obtain the grading matrix 
for the Cox ring of $Z$ and the
irrelevant ideal we again ask
Magma to do it.
For example the variety n. 44 is:
\begin{verbatim}
> FanoX(44);
Toric variety of dimension 4
Variables: $.1, $.2, $.3, $.4, $.5, $.6
The components of the irrelevant ideal are:
    ($.6, $.4, $.3), ($.5, $.2, $.1)
The 2 gradings are:
    0, 0, 1, 1, 0, 1,
    1, 1, 2, 0, 1, 2
\end{verbatim}

This gives the second and third
column of the central table of 
our theorem. To provide the 
projective models for the five 
varieties we calculate the value 
of $-K_Z^4$, which is just the
degree of $Z$ in the anticanonical 
embedding:
\begin{verbatim}
> Degree(-CanonicalDivisor(FanoX(44)));       
594
\end{verbatim}
and determine all the linear relations
within the vertices of the 
polytope defined by $-K_Z$:
\begin{verbatim}
> Kernel(Matrix(Vertices(FanoP(44))));
RSpace of degree 6, dimension 2 over Integer Ring
Echelonized basis:
( 1  1  0 -2  1  0)
( 0  0  1  1  0  1)
\end{verbatim}
Looking for varieties with these
invariants in~\cite[Proposition 3.1.1 
and pag. 1046]{Ba} we obtain the
left hand side column of our table.
\end{proof}

\begin{remark}
Observe that 
$\pp_{\pp^2}(\Osh\oplus\Osh\oplus\Osh(2))$
is isomorphic to the blow-up, along
the vertex, of the quadratic cone of 
dimension four with vertex a line,
while
$\pp_{\pp^2}(\Osh\oplus\Osh\oplus\Osh(1))$
is isomorphic to the blow-up of 
$\pp^4$ along a line.
\end{remark}

\begin{theorem}
Let $Z$ be one of the five varieties
of Theorem~\ref{cy} and let $P_Z$
be the polytope defined by $-K_Z$.
Let $Z^*$ be the toric variety 
constructed from the dual polytope
$P_Z^*$. Then the general element
of $|-K_{Z^*}|$ is a Mori dream 
Calabi-Yau variety whose Cox ring
admits just one relation: 
$\R(X^*)\cong\R(Z^*)/(f^*)$.
\end{theorem}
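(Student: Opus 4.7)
The plan is to apply Corollary~\ref{hyp-sec} to $Z^*$ with the ample class $w = [-K_{Z^*}]$. Since $Z$ is a smooth Fano, its anticanonical polytope $P_Z$ is reflexive; therefore $P_Z^*$ is a lattice polytope and the toric variety $Z^*$ spanned by its face fan is a Gorenstein toric Fano whose anticanonical class is ample (and automatically basepoint-free, since ample divisors on projective toric varieties are). Being toric, $Z^*$ is a Mori dream space, and $\dim(Z^*) = 4$, so the auxiliary hypothesis on $(\varphi_w)_*K_{Z^*}(1)$ needed in dimension $3$ does not enter. A general $f^* \in \R(Z^*)_w$ cuts out a normal, irreducible hypersurface $X^*$, and the embedding $i:X^* \to Z^*$ is good.

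It remains to verify condition (i) of Corollary~\ref{hyp-sec}, namely that $\hat{X}^*$ is big in $\bar{X}^*$. As in Remark~\ref{smooth}, the argument does not require smoothness of the ambient variety: since $[X^*] = [-K_{Z^*}]$ is ample in $\Cl(Z^*)$, any defining section $f^*$ lies in $\R(Z^*,w)_{>0}$, so $V(\I(Z^*)) \subset V(f^*) = \bar{X}^*$ and in fact $V(\I(Z^*)) = \bar{X}^* - \hat{X}^*$. Consequently $\hat{X}^*$ is big in $\bar{X}^*$ if and only if $V(\I(Z^*))$ has codimension at least $3$ in $\bar{Z}^*$, a condition that depends only on $Z^*$.

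The last step is to check this codimension bound case by case for the five varieties $Z$ listed in Theorem~\ref{cy}. For $Z = \pp^4$ the dual polytope yields $Z^* = \pp^4$, so the irrelevant ideal is maximal and the bound holds trivially. For each of the remaining four cases, I would use the same Magma tools as in the proof of Theorem~\ref{cy}: compute the vertices of $P_Z^*$, build the associated toric variety $Z^*$, extract the grading matrix and irrelevant ideal of $\R(Z^*)$, and verify directly that $V(\I(Z^*))$ has codimension at least $3$ in $\bar{Z}^*$. Once this is confirmed, Corollary~\ref{hyp-sec} yields $\R(X^*) \cong \R(Z^*)/(f^*)$; the Calabi-Yau property follows from adjunction since $K_{Z^*}+X^* \sim 0$, and $X^*$ is a Mori dream space because its Cox ring is a finitely generated quotient of $\R(Z^*)$.

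The main obstacle is precisely this case-by-case check: the dual Fanos $Z^*$ typically acquire Gorenstein singularities, so their Cox rings are not directly available from the smooth-Fano database used in Section~3 and must instead be read off from the combinatorics of $P_Z^*$. Still, the Magma routines of that section apply with only the substitution of the spanning fan of $P_Z^*$ for the fan of $Z$, reducing the verification to a routine computation in each of the five cases.
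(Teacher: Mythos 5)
Your proposal is correct and follows essentially the same route as the paper: reduce via Corollary~\ref{hyp-sec} to checking that the irrelevant locus of $\bar Z^*$ has codimension at least $3$, and verify this computationally for the five dual Fanos (the paper's Magma run returns codimensions $3,3,3,3,5$). Your added justifications — reflexivity of $P_Z$ giving a Gorenstein toric Fano with ample, basepoint-free $-K_{Z^*}$, and the observation that the equality $V(\I(Z^*))=\bar X^*-\hat X^*$ from Remark~\ref{smooth} does not need smoothness of the ambient variety — are accurate and merely make explicit what the paper leaves implicit.
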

\begin{proof}
To prove the theorem it is enough
to show, by Corollary~\ref{hyp-sec},
that the irrelevant ideal
of the Cox ring $\R(Z^*)$ has codimension
at least $3$. The following Magma
command calculates the codimension
of the irrelevant ideal for 
any such $Z^*$:
\begin{verbatim}
> [Length(FanoDualX(n))-Dimension(IrrelevantIdeal(FanoDualX(n))): 
n in [44,70,141,146,147]];
[ 3, 3, 3, 3, 5 ]
\end{verbatim}
\end{proof}

\section{Hypersurfaces of $\pp^n$
containing a codimension two linear space}
Denote by $X_d$ a general
degree $d$ smooth hypersurface of $\pp^n$
containing a linear subspace $L$ 
of codimension $2$. Here we assume
$X_d$ to be at least three dimensional.
An elementary calculation shows
that $X_d$ is singular at a finite
set of points lying on $L$.
However, after blowing up $L$ in 
$\pp^n$, the resulting strict transform
$X$ of $X_d$ is smooth.
Denote by $\pi: Z\to \pp^n$ 
the blow-up map at $L$.
The Cox ring of $Z$ is a polynomial
ring in $n+2$ variables with
grading matrix
\[
 \left[\begin{matrix}
  1& 1& 1& \dots & 1 & 0\\
  -1& -1& 0&\dots & 0 & 1
 \end{matrix}
 \right]
\]
%$\R(Z)\cong\cc[x_1,\dots,x_{n+2}]$ is 
%$\zz^2$-graded with $\deg(x_1)=\deg(x_2)=e_1-e_2$,
%$\deg(x_{n+2})=e_2$ and 
%$\deg(x_i)=e_1$ for remaining $i$.
The last variable, $x_{n+2}$, 
corresponds to the exceptional 
divisor of the blow-up.
Denote by $\bar{Z}:=\Spec(\R(Z))$ and by 
$\hat{Z}\subseteq\bar{Z}$ the characteristic space
of $Z$ together with the characteristic
map $p:\hat{Z}\to Z$. Observe that
the irrelevant ideal of $Z$ is
\[
 \I(Z)= (x_1,x_2)\cap (x_3,\dots,x_{n+2}).
\]
Let $\hat{X}:=p^{-1}(X)$
and $\bar{X}$ be its Zariski closure of
$\bar{Z}$. The equation of $\bar{X}$ in 
$\bar{Z}$ is:
\[
  x_1f+x_2g = 0,
\]
where $f$ and $g$ are general (very general
if $n=3$) polynomials of degree $(d-1)e_1$.
Observe that the quotient ring $\R(Z)/(x_1f+x_2g)$
is non-factorial, since, $\bar{x}_1$ 
is irreducible in the quotient ring, but 
it does not divide $\bar{g}$ due
to the generality assumption on $X_d$.
We introduce a new variable $x_0$ in order
to obtain factoriality. Consider the $\zz^2$-graded
ring
\[
  R := \cc[x_0,\dots,x_{n+2}]/(x_0x_2-f,x_0x_1+g),
\]
where the gradings of $x_i$, with $i=1,\dots,n+2$,
are given before and $\deg(x_0)=(d-2)e_1+e_2=
 \left[\begin{matrix}
  d-2\\
  1
 \end{matrix}
 \right]$

%(d-2)e_1+e_2$.

\begin{theorem}\label{Cox-4}
The Cox ring $\R(X)$ is isomorphic to $R$.
\end{theorem}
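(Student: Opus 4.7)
The plan is to exhibit $\Spec R$ as ambient for a characteristic space of $X$ and then apply the characterization of characteristic spaces of \cite[Theorem 6.4.3]{ArDeHaLa}, exactly as in Remark~\ref{nonnef}. Write $Y := \Spec R$ and let $H := \Spec(\cc[\zz^2])$ act on $Y$ via the $\zz^2$-grading.

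First I would establish the geometric link. On $\hat X := p^{-1}(X)$ one has $(x_1,x_2)\neq(0,0)$ and $x_1f+x_2g=0$, so the expressions $f/x_2$ on $\{x_2\neq 0\}$ and $-g/x_1$ on $\{x_1\neq 0\}$ glue into a single regular function $x_0$ on $\hat X$. This yields an $H$-equivariant morphism $\Psi:\hat X\to Y$ satisfying $x_0x_2=f$ and $x_0x_1=-g$. Setting
\[
 J := (x_1,x_2)\cap(x_3,\dots,x_{n+2})\subset R,
 \qquad
 \hat Y := Y\setminus V(J),
\]
the image of $\Psi$ lies in $\hat Y$, and the projection $\hat Y\to\bar Z$ forgetting $x_0$ is its inverse: indeed on $\hat Y$ the identity $x_1f+x_2g=x_1(x_0x_2)-x_2(x_0x_1)=0$ holds, and $(x_1,x_2)\neq(0,0)$ there. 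Hence $\Psi:\hat X\to\hat Y$ is an $H$-equivariant isomorphism, and composing its inverse with the good quotient $p|_{\hat X}:\hat X\to X$ gives a good quotient $p_X:\hat Y\to X$.

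Next I would check that $Y$ is a normal irreducible variety of dimension $n+1$: the two relations $x_0x_2-f$ and $x_0x_1+g$ form a regular sequence for general $f,g$, so $Y$ is a Cohen-Macaulay complete intersection, and a Jacobian calculation shows its singular locus has codimension at least $2$. For bigness of $\hat Y$ in $Y$, a direct dimension count using that $f|_{x_1=x_2=0}$ and $g|_{x_1=x_2=0}$ are general forms of degree $d-1$ in $x_3,\dots,x_{n+1}$ cutting out a codimension two locus, and that $f,g$ vanish identically on $\{x_3=\cdots=x_{n+2}=0\}$ (because the bidegree constraint $a_{n+2}=a_1+a_2$ forces each monomial of bidegree $(d-1,0)$ to contain at least one of $x_3,\dots,x_{n+2}$), shows that both components of $V(J)\cap Y$ have codimension at least $2$ in $Y$; here one uses the assumption $\dim X_d\geq 3$. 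Consequently $\Gamma(\hat Y,\Osh)=\Gamma(Y,\Osh)=R$.

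The core of the argument is proving that $R$ is a UFD. I would equip $\cc[x_0,\dots,x_{n+2}]$ with a $\zz$-grading by positive weights for which both relations are homogeneous, and invoke Samuel's criterion \cite{Sa} as in Remark~\ref{nonnef}. The decisive point is that the extra variable $x_0$ precisely removes the non-factoriality of $\R(Z)/(x_1f+x_2g)$: in that quotient $\bar x_1$ was irreducible but did not divide $\bar g$, whereas in $R$ the relation $(-x_0)x_1=g$ supplies the missing factor, and a case analysis shows that every height one homogeneous prime of $R$ is principal. With factoriality in hand, smoothness of the toric variety $Z$ ensures that the $H$-action on $\hat Z$ is free, hence also free on $\hat X\cong\hat Y$, so that the action on $\hat Y$ is strongly stable in the sense of \cite[Definition 6.4.1]{ArDeHaLa}. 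Applying \cite[Theorem 6.4.3]{ArDeHaLa} then shows that $p_X:\hat Y\to X$ is a characteristic space, so that $\R(X)\cong\Gamma(\hat Y,\Osh)=R$. The main obstacle is the verification of factoriality via Samuel's criterion.
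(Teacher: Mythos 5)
Your strategy is genuinely different from the paper's. You identify $\hat X$ equivariantly with the big open subset $\hat Y\subset\Spec R$ by gluing $f/x_2$ and $-g/x_1$ into the function $x_0$, check that $\hat Y$ is big in $\Spec R$, and then try to run the argument of Remark~\ref{nonnef}: factoriality of $R$, strong stability of the $H$-action, and \cite[Theorem 6.4.3]{ArDeHaLa}. The identification $\hat X\cong\hat Y$ and the codimension counts are correct, so your argument does reduce everything to proving that $R$ is a UFD (equivalently $H$-factorial). The paper, by contrast, never proves factoriality directly: it realizes $\Spec R$ as $\bar Y_2$ inside the affine space over an auxiliary toric variety $Z_1$ obtained by adjoining $x_0$ as a new Cox coordinate of degree $(d-2)e_1+e_2$, applies the Lefschetz-type Corollary~\ref{hyp-sec} twice along the chain $Y_2\subset Y_1\subset Z_1$ of general members of the ample and spanned system $|(d-1)e_1|$ to get $R\cong\R(Y_2)$, and then shows that the projection $Y_2\to X$ forgetting $x_0$ is an isomorphism in codimension one. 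In that route factoriality of $R$ is a consequence, not an input, since $\Cl(Y_2)\cong\zz^2$ is free.

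The gap is exactly the step you flag as ``the main obstacle'': you never prove that $R$ is factorial, you only assert that ``a case analysis shows that every height one homogeneous prime of $R$ is principal.'' This does not follow from Samuel's criterion in any off-the-shelf way. In Remark~\ref{nonnef} the criterion applies because there is a single relation of the special shape $x_3x_4+(\cdots)$ with respect to a positive $\zz$-grading; here $R$ is a codimension-two complete intersection whose two relations involve \emph{general} forms $f,g$, with no visible quadric-like structure. The natural substitutes also require real work: Nagata's lemma applied to $x_2$ forces you to show both that $x_2$ is prime in $R$ and that $R[x_2^{-1}]\cong\bigl(\cc[x_1,\dots,x_{n+2}]/(x_1f+x_2g)\bigr)[x_2^{-1}]$ is a UFD --- a localization of precisely the ring the paper points out is \emph{not} factorial, so nothing comes for free; inverting $x_0$ does not eliminate variables cleanly because $f$ and $g$ themselves involve $x_1,x_2$; and Grothendieck's parafactoriality criterion would require the singular locus of $\Spec R$ to have codimension at least $4$, which needs a separate and nontrivial verification along $V(x_1,x_2)\cap\Spec R$ (a codimension-two locus). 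Until this factoriality step is actually carried out, the proof is incomplete; supplying it would likely be at least as hard as the paper's detour through $Z_1$, which was designed to avoid it.
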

\begin{proof}
Let $Z_1$ be the toric variety whose Cox ring
$\R(Z)$ is isomorphic to the $\zz^2$-graded 
polinomial ring $\cc[x_0,\dots,x_{n+2}]$
such that $w:=e_1$ is an ample class.
The Cox ring of $Z_1$ is a polynomial
ring in $n+3$ variables with
grading matrix
\begin{equation}\label{gr-mat}
 \left[\begin{matrix}
  d-2 & 1& 1& 1& \dots & 1 & 0\\
  1 & -1& -1& 0&\dots & 0 & 1
 \end{matrix}
 \right]
\end{equation}
The first variable is the $x_0$ just
introduced, while the remaining
variables have the same grading
of those of $\R(Z)$.
The irrelevant ideal of $\R(Z_1)$ is 
\[
  \I(Z_1) = (x_0,x_3,\dots,x_{n+2})
  \cap (x_1,\dots,x_{n+1}),
\]
so that the corresponding irrelevant locus 
$\bar{Z}_1^0$ has codimension $n+1$ in $\bar{Z}_1$.
Observe that a very general element of the linear
series $|(d-1)e_1|$ in $Z_1$ can be written
in the form $\alpha(x_0x_2-f)+\beta(x_0x_1+g)$,
with $f$ and $g$ very general. Let
$Y_1\in |(d-1)e_1|$ be such a very general
element. We observe that $Y_1$ is normal, 
by~\cite[Theorem 1.7.1]{BeSo}. 
Moreover the inclusion map 
$i: Y_1\to Z_1$ induces an isomorphism 
$i^*:\Cl(Z_1)\to\Cl(Y_1)$, 
by~\cite[Theorem 1]{RaSr1}.
Let $Y_2\subseteq Y_1$ be a very general
element of the linear series $|(d-1)e_1|$
cut out in $Z_1$ by the equations
$x_0x_2-f=0,x_0x_1+g=0$.
As before we see that $Y_2$ is
normal and the inclusion map 
$j: Y_2\to Z_2$ induces an isomorphism 
$j^*:\Cl(Y_1)\to\Cl(Y_2)$.
Let $\hat{Y}_i := p^{-1}(Y_i)$ and
let $\bar{Y}_i$ be the Zariski closure
of $\hat{Y}_i$ in $\bar{Z}_i$.
Observe that the intersection 
$\{x_k=0\}\cap \bar{Y}_i$ is irreducible 
for any $i$ and $k$. Moreover as $k$ varies,
the intersections are distinct.
Thus, since $\deg(x_0x_2-f)=\deg(x_0x_1+g)=(d-1)e_1$ 
is ample and spanned in both $Z_1$ and $Y_1$,
then $\hat{Y}_i$ is a characteristic space
for $Y_i$, by Theorem~\ref{hyp-sec}.
Consider now the commutative diagram:
\begin{equation*} 
\xymatrix{
&\bar{Y}_2\ar[rr]\ar[dl]_-{\bar{\varphi}}
&&\bar{Y}_1 \ar[rr]
&& \bar{Z}_1\ar[dl]_-{\pi} \\ 
\bar{X}\ar[rrrr]&&&&\bar{Z}\\ 
&\hat{Y}_2\ar[rr]\ar[dl]_-{\hat{\varphi}}
\ar[dd]|!{[d];[d]}\hole \ar[uu]|!{[u];[u]}\hole
&&\hat{Y}_1 \ar[rr]|!{[r];[r]}\hole 
\ar[dd]|!{[d];[d]}\hole \ar[uu]|!{[u];[u]}\hole
&&\hat{Z}_1\ar[dl]\ar[dd]^{p_1}
\ar[uu]\\ 
\hat{X}\ar[rrrr]\ar[dd]_(.35){p_X}\ar[uu]
&&&&\hat{Z}\ar[dd]^(.35){p}\ar[uu]\\ 
&Y_2\ar[rr]^-{j}\ar[dl]_-{\varphi}
&&Y_1 \ar[rr]^(.35){i}|!{[r];[r]}\hole
&&Z_1 \ar[dl]\\ 
X\ar[rrrr]&&&& Z,
}
\end{equation*}
where all the horizontal arrows are inclusion 
maps and $\pi$ is the projection on the last 
$n+2$ coordinates and $\bar{\varphi}$ is the 
restriction of $\pi$ since $(x_0x_2-f,x_0x_1+g)
\cap\cc[x_1,\dots,x_{n+2}] = (x_1f+x_2g)$.
If $x\in\hat{Y}_2-\{x_1=x_2=0\}$, then $x_0$ 
is uniquely determined by the equations of $Y_2$, 
so that $\hat{\varphi}$ is one to one on this
big open subset of $\hat{Y}_2$.
Observe that $\{x_1=x_2=0\}$ is not contained
in $\hat{X}_d$ since it is contained in the
irrelevant locus of $\bar{Z}$.
Thus $\varphi$ is an isomorphism in codimension 1
so that $\R(X)\cong\R(Y_2)\cong\Osh(\bar{Y}_2)=R$.

\end{proof}

\begin{remark}
Unfortunately Corollary~\ref{hyp-sec}
can not be applied to 
compute $\mathcal R(X)$ 
in the case $n=3$.
In fact, to show that the 
generalized Lefschetz 
Theorem still holds for 
$Y_2 \subset Y_1$ one needs 
the extra condition that
$\varphi_*(K_{Y_1})(1)$ is globally 
generated, where $\varphi$ 
is the morphism on $Y_1$ 
associated to the complete 
linear series $|Y_2|$.
According to the grading 
matrix~\eqref{gr-mat} 
the class $D$ of $Y_1$ 
has degree $(d-1)e_1$ 
and the zero locus of 
all monomials in the 
Riemann-Roch space of $D$ 
coincides with the 
irrelevant locus of 
$\bar Y_2$. 
Thus $D$ is very ample, 
so that the previous condition 
is equivalent to the 
base-point freeness of 
the divisor $K_{Y_2}+D$.
However, since $K_{Y_2}+D=K_{Y_1}+2D$ 
has degree $(d-4)e_1$ 
then the common zero 
locus of its monomials 
is the union of the two 
components $V(x_1,x_2,x_3,x_4)$ 
and $V(x_3,x_4,x_5)$.
The first component 
is contained into the 
irrelevant locus, 
while the second one 
intersects $\bar Y_2=V(x_0x_2-f)$ 
along a two-dimensional 
orbit which, under 
the action of the torus, 
becomes a point $p$ of $Y_2$. 
Hence the base locus of 
$K_{Y_1}+2D$ consist 
exactly of $p$, so that 
the required condition fails.
In the following subsection 
we will prove the analogous 
of Theorem~\ref{Cox-4} 
in the 3-dimensional case 
using a different technique.
\end{remark}

\subsection{Calculating the Cox 
ring when $n=3$}
We now proceed to calculate 
a presentation for the Cox ring
in the three dimensional case.

\begin{lemma}\label{h1}
Let $a$, $b$ be positive integers
such that $a+(2-d)b>0$ and 
$a+b>d-4$. Then $D=aH+bL$ is
non-special, that is $h^1(D)=0$.
\end{lemma}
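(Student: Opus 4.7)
The plan is to reduce $h^1(X,D)$ to $h^1(X,aH)$ by peeling off copies of $L$ via restriction, and then to invoke a standard vanishing on $\pp^3$. To set this up I first record the intersection data on the smooth surface $X$: since $X$ is a degree $d$ hypersurface of $\pp^3$ containing the line $L$, one has $K_X=(d-4)H$, $H^2=d$, and $H\cdot L=1$, while applying the genus formula to $L\cong\pp^1$ forces $L^2=2-d$. Consequently $D|_L\cong\Osh_{\pp^1}(a+(2-d)b)$, whose degree is strictly positive by hypothesis (i), so that $H^1(L,D|_L)=0$.

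The main step is an iteration based on the ideal sheaf sequence of $L\subset X$ twisted by $\Osh_X(D)$:
\[
0\to \Osh_X(D-L)\to \Osh_X(D)\to \Osh_L(D|_L)\to 0.
\]
Because $H^1(L,D|_L)=0$, the long exact sequence yields a surjection $H^1(X,D-L)\twoheadrightarrow H^1(X,D)$, reducing the problem to $h^1(X,D-L)=0$. Iterating $b$ times one arrives at $h^1(X,aH)$, which vanishes by the hypersurface sequence
\[
0\to \Osh_{\pp^3}(a-d)\to \Osh_{\pp^3}(a)\to \Osh_X(a)\to 0
\]
together with the standard vanishings $H^1(\pp^3,\Osh(a))=H^2(\pp^3,\Osh(a-d))=0$. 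The iteration is legitimate because at the $k$-th step the degree of the restriction to $L$ equals $a+(2-d)(b-k+1)$, a quantity non-decreasing in $k$ for $d\geq 2$; hypothesis (i) therefore keeps it positive throughout.

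The only real obstacle is this monotonicity bookkeeping. Hypothesis (ii), $a+b>d-4$, does not appear to enter the argument above: for positive integers $a,b$ with $d\geq 2$, condition (i) already forces $a\geq d-1$, so $a+b\geq d>d-4$. I suspect the authors nonetheless list (ii) because it is exactly the condition that $\deg(D|_C)=(a+b)(d-1)$ exceed $2g(C)-2=(d-1)(d-4)$ for a smooth curve $C\in|H-L|$ (the residual plane curve cut out by a hyperplane section through $L$); this would be needed if one preferred to iterate by peeling off copies of $C$ instead of $L$, or it may be used elsewhere in the subsequent Cox ring computation.
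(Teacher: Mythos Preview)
Your argument is correct and takes a different route from the paper's. The paper proceeds via Kawamata--Viehweg vanishing: setting $\varepsilon=(d-4)/(d-2)$, one writes $D-K_X=N+\varepsilon L$ with $N=(a-d+4)H+(b-\varepsilon)L$, and checks that $N\cdot L=a+(2-d)b>0$ by hypothesis~(i) while $N\cdot(H-L)=(d-1)\bigl(a+b-(d-4)-\varepsilon\bigr)>0$ by hypothesis~(ii). Hence $N$ lies in the interior of the nef cone, so is an ample $\mathbb Q$-divisor, and since $0\le\varepsilon<1$ the pair $(X,\varepsilon L)$ is klt, giving $h^1(D)=0$ in one stroke.

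Your restriction-to-$L$ induction is more elementary, needing only the cohomology of line bundles on $\mathbb P^1$ and $\mathbb P^3$, and indeed never uses~(ii). Your side observation that (ii) is already forced by~(i) for positive integers $a,b$ (since $a>(d-2)b\ge d-2$ gives $a+b\ge d$) is correct; in the paper's proof, (ii) is precisely the ingredient that makes $N\cdot(H-L)>0$, so there it is not idle even though it turns out to be redundant under the stated hypotheses. What your approach buys is a slightly sharper statement and independence from Kawamata--Viehweg; what the paper's approach buys is a one-line argument once the ample $\mathbb Q$-divisor is exhibited.
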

\begin{proof}
By the adjunction formula 
the canonical
divisor $K_X$ is linearly 
equivalent to $(d-4)H$.
Write $D-K_X=N+\ep L$, where
$N:=(a-d+4)H+(b-\ep) L$
and $\ep:=(d-4)/(d-2)$.
Intersecting with the generators
of the extremal rays of the 
effective cone gives
\[
 N\cdot L = (a-d+4)+(b-\ep)(2-d)
 > 0
 \qquad
 N\cdot (H-L) = a+b-(d-4)-\ep > 0,
\]
where the second inequality
follows since $0<\ep<1$.
Thus $N$ lies in the interior of
the nef cone, so that it is ample.
Thus we conclude by the Kawamata-Viehweg
vanishing theorem.
\end{proof}

\begin{theorem}\label{cox}
The Cox ring of $X$ is
the following $\zz^2$-graded ring\\
\begin{center}
\begin{tabular}{c|c}
$\R(X)$ & Grading matrix\\
\midrule

 $\dfrac{\cc[x_0,x_1,x_2,x_3,x_4,x_5]}
  {(x_0x_1-f, x_0x_2-g)}$
  &
 $\left[\begin{matrix}
  d-2 & 1 &1 & 1 & 1 & 0 \\
    1 &-1 &-1 & 0 & 0 & 1
  \end{matrix}\right]$\\[15pt]
\end{tabular}
\end{center}
where $f$ and $g$ are very general
polynomials in $x_1x_5, x_2x_5, x_3,x_4$. 
\end{theorem}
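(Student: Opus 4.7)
The plan is to construct an explicit graded ring homomorphism $\phi\colon R\to\R(X)$, verify the two relations by construction, and then show that $\phi$ is an isomorphism by comparing Hilbert functions bidegree-by-bidegree, using Lemma \ref{h1} to compute $h^0(X,aH_X+bE_X)$.

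First, I would take $x_1,\ldots,x_5$ to be the restrictions to $X$ of the Cox-ring generators of the ambient blow-up $Z\to\pp^3$ along $L$; they land in the bidegrees prescribed by the grading matrix, relative to the basis $(H_X,E_X)$ of $\Cl(X)$ with $E_X=[L]$. For the extra generator $x_0$, I would observe that the defining equation of $X$ in $\R(Z)$ has the form $x_1 g - x_2 f$, with $f,g$ very general of bidegree $(d-1,0)$ in $\cc[x_1x_5,x_2x_5,x_3,x_4]$; on $\hat X$ the identity $x_1 g = x_2 f$ implies that the quotient $f/x_1 = g/x_2$ is a regular function of weight $(d-2,1)$ on $\hat X\setminus\{x_1=x_2=0\}$, and since $\{x_1=x_2=0\}\subseteq V(\I(Z))$ is already excluded from $\hat X$, this quotient extends to a regular function on all of $\hat X$. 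I would set $x_0$ equal to this function, so that the relations $x_0x_1-f$ and $x_0x_2-g$ hold tautologically and $\phi$ is well defined.

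Next, adjunction on $Z$ gives $K_X=(d-4)H_X$ and intersection numbers $H_X^2=d$, $H_X\cdot E_X=1$, $E_X^2=2-d$. Riemann--Roch together with the non-speciality result of Lemma \ref{h1} yields $h^0(X,aH_X+bE_X)$ for every class in the interior of the effective cone $\Eff(X)=\langle H_X-E_X,E_X\rangle$. On the $R$-side, the very general choice of $f,g$ makes $(x_0x_1-f,x_0x_2-g)$ a regular sequence in $\cc[x_0,\ldots,x_5]$, so the Koszul complex
\[
0\to\cc[x_0,\ldots,x_5]\bigl(-2(d-1,0)\bigr)\to\cc[x_0,\ldots,x_5]\bigl(-(d-1,0)\bigr)^{\oplus 2}\to\cc[x_0,\ldots,x_5]\to R\to 0
\]
is exact, and its Euler characteristic in each bidegree computes $\dim R_{(a,b)}$. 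A direct comparison would show that the two Hilbert functions agree, and checking that $\phi_{(a,b)}$ is surjective by exhibiting explicit monomial generators in each graded piece would force $\phi$ to be an isomorphism.

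The main obstacle is the dimension count in bidegrees on the boundary of $\Eff(X)$, where Lemma \ref{h1} does not directly apply. I would handle classes on the ray generated by $E_X=L$ via the restriction sequence
\[
0\to\Osh_X(D-L)\to\Osh_X(D)\to\Osh_L(D|_L)\to 0,
\]
which reduces $h^0$ on $X$ to degree counts on $L\cong\pp^1$ together with a shift that falls inside the Lemma \ref{h1} range; classes along the ray generated by $H_X-E_X$ would be dealt with by a parallel argument using the fixed curve $V(x_1)\cap X$ of that class. Finally, the very general assumption on $f$ and $g$ is exactly what guarantees both the exactness of the Koszul complex and the linear independence of the monomial sections chosen in each bidegree.
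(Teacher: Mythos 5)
Your construction of the map $\phi$ is sound and close in spirit to what the paper does for general $n$ in Theorem~\ref{Cox-4}: since $V(x_1,x_2)$ lies in the irrelevant locus, the function $f/x_2=-g/x_1$ is regular on all of $\hat X$ and has weight $(d-2,1)$, so the two relations hold by construction; and injectivity of $\phi$ is cheap once one knows $R$ is a $4$-dimensional domain (the kernel is prime and the image already has dimension $4$). The genuine gap is in the other half. You reduce surjectivity to the assertion that the Koszul Euler characteristic of $R$ equals $h^0(aH+bL)$ in every bidegree, but that identity is essentially equivalent to the theorem, and the tool you propose for computing the right-hand side --- Riemann--Roch plus Lemma~\ref{h1} --- provably cannot do so on a large part of the nef cone. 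The locus where Lemma~\ref{h1} is inapplicable is not just the two boundary rays of $\Eff(X)$: since $(aH+bL)\cdot(H-L)=(d-1)(a+b)$, the condition $a+b>d-4$ excludes the full-dimensional strip $0\le a+b\le d-4$ of the nef cone, and there $h^1$ is genuinely nonzero. For instance $F=H-L$ is a base-point-free pencil with $F^2=0$ and $F\cdot K_X=(d-4)(d-1)$, so $\chi(mF)=\chi(\Osh_X)-\tfrac12m(d-4)(d-1)\to-\infty$ while $h^0,h^2\ge 0$; hence $h^1(mF)\to\infty$ for $d\ge 5$ and Riemann--Roch says nothing about $h^0(mF)$. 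Your proposed patch (restriction sequences along the two extremal rays) does not cover this region, and your closing step ``exhibit explicit monomial generators in each graded piece'' is a restatement of the generation problem rather than a solution to it.

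This is precisely why the paper never computes $h^0$: it proves generation directly through a five-case analysis establishing surjectivity of multiplication and restriction maps (onto sections on $\pp^3$, on the pencil $|H-L|$, on $L\cong\pp^1$, and on a plane curve in $|H-L|$), invoking Lemma~\ref{h1} only in the range where it applies; the relations then come essentially for free from the fact that $R$ is a domain and $\Spec R$, $\Spec\R(X)$ are both integral of dimension $4$. If you wish to keep the Hilbert-function strategy, you would still have to establish the dimension counts in the strip $a+b\le d-4$ and on the ray $D\cdot L=0$ by arguments of exactly this kind, at which point you will have reproduced the paper's proof.
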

\begin{proof}
The grading matrix in the statement 
of the theorem is given with respect
to the classes of $H$ and $L$.
We now choose $s_0,\dots,s_5$
non-zero sections of the Cox ring
such that the degree of $s_i$ 
is the $i+1$-th column of the 
grading matrix.
Let $s_0\in H^0((d-2)H+L)$
be a section which does 
not vanish on $L$. 
There is such a section
by Lemma~\ref{h1}
and Kawamata-Viehweg.
Let $s_1,s_2$ be a basis 
of $H^0(H-L)$ and let
$s_3,s_4$ be such that 
$H^0(H)=\langle s_2s_5, s_3s_5, 
s_3, s_4\rangle$.
Finally let $s_5$ be a section 
defining $L$.

To prove that the $s_i$
actually generate the Cox ring
it is enough to show it 
for nef divisors, since if $D$
is not nef, then $D\sim N+aL$
for some positive integer $a$ 
and some nef divisor $N$.
Thus $H^0(D)=s_5^a\cdot H^0(N)$.
Consider now
a nef divisor $D=aH+bL$.
The
exact sequence
$0\to
\Osh_X(D-2F)\to
\Osh_X(D-F)\oplus \Osh_X(D-F)
\to\Osh_X(D)\to 0
$
induces the following
exact sequence in cohomology:
\[
  \xymatrix{
  H^0(D-F)\oplus H^0(D-F)\ar[r]^-{f} &
  H^0(D)\ar[r] & H^1(D-2F),
  }
\]
where $f(u,v)=us_1+vs_2$.
Observe that since $D$ is nef
then the following hold:
\[
 a+b\geq 0
 \qquad
 k:=D\cdot L=a+(2-d)b\geq 0.
\]
We consider five cases.
\begin{enumerate}\leftskip -5mm
\item
If $b=0$, then $D=aH$. Since 
$h^1(\Osh_{\pp^3}(n))=0$ for all 
$n\in \mathbb Z$ 
by~\cite[Theorem 5.1.]{Ha},
then the restriction map 
$H^0(\Osh_{\pp^3}(a))\to H^0(aH)$ 
is surjective 
for all $a\geq 0$. 
This implies that
$H^0(aH)$ is a polynomial
in $s_1,\dots,s_5$,
for any non negative $a$.
  
\item
If $a+b=0$, then $D$
is a multiple of $F$. 
Hence the complete linear
series $|D|$ is composed 
with the pencil $|F|$
or equivalently
any element of $H^0(D)$
is a polynomial in
$s_1,s_2$.

\item 
If $k\geq 2d-3$ and $a+b>d-4$,
then $h^1(D-2F)=0$, 
by Lemma \ref{h1}. 
In this case, the map $f$ 
is surjective, so that the 
elements of $H^0(D)$ 
are polynomials in $s_1,s_2$ 
and sections 
in $H^0(D-F)$.

\item
If $k\leq 2d-4$ and
$b\geq 1$,
consider the commutative diagram:
\[
  \xymatrix{
  & & H^0(kH)\ar[d]^{\cdot s_0^b}
  \ar[rd]^-{r'}& \\
  0\ar[r] & 
  H^0(D-L)\ar[r]^-{\cdot s_5} & 
  H^0(D)\ar[r]^-{r} & 
  H^0(\Osh_{\pp^1}(k))\ar[r] & 
  H^1(D-L),
  }
\]
where the bottom row is exact,
the vertical map is multiplication
by $s_0^b$ since $D=kH+b((d-2)H+L)$
and $r'$ is the 
restriction map to $L$.
Since $a\geq (d-2)b$ and 
$a+b\geq (d-1)b\geq d-1$,
then $h^1(D-L)=0$ 
by Lemma~\ref{h1}, so that 
$r$ is surjective.
Moreover $r'$ is surjective
for any $k\geq 0$ since the restrictions of $s_3,s_4$
to $L$ span $H^0(\Osh_{\pp^1}(1))$.
Thus any element
$s\in H^0(D)$ 
is a sum $s=u\,s_5+v\,s_0^b$,
with
$u\in H^0(D-L)$ and 
$v\in H^0(kH)$.

\item
If $a+b\leq d-4$, then 
$b\leq 0$ since $a\geq (d-2)b$.
Let $C\in |H-L|$ be the curve 
defined by $s_1$.
Observe that $C$ is a 
plane curve of 
degree $d:=a+b$.
Thus we have a commutative 
diagram:
\[
\xymatrix{
 & & H^0(\Osh_{\pp^3}(d))\ar[r]^-{r_1}
 \ar[d]_{\gamma}
 & H^0(\Osh_{\pp^2}(d))\ar[d]^-{r_2}\\
 0\ar[r] & 
 H^0(D-F)\ar[r]^-{\cdot s_1} & 
 H^0(D)\ar[r]_-r & H^0(D_{|C}),
}
\]
where $r_1$, $r_2$, $r$ are 
restriction maps, $\gamma$ is the restriction 
to $X$ composed with the 
multiplication by $s_2^{-b}$
and the bottom row is
exact.
Since $h^1(\Osh_{\pp^2}(h))=0$, 
for any $h\in\zz$,
by~\cite[Theorem 5.1.]{Ha}, 
then $r_2$ is surjective.
Since both $r_1$ and $r_2$ 
are surjective, then $r$ 
is surjective as well.
Thus, any section of 
$s\in H^0(D)$ is a sum
$s=u\,s_1+v\,s_2^{-b}$,
where $u\in H^0(D-F)$
and $v\in H^0(dH)$.
\end{enumerate}
 
The previous arguments show 
that the Cox ring $\R(X)$ 
is generated by 
$s_0,\dots, s_5$.
Let $I_X$ be the kernel 
of the ring homomorphism
\[
  \cc[x_0,\dots,x_5]\to \R(X)
  \qquad
  x_i\mapsto s_i.
\]
Since the vector space 
$H^0(4H)$ is generated 
by degree $4$ polynomials 
in $s_1s_5$, $s_2s_5$, 
$s_3,s_4$ and 
$s_0s_1, s_0s_2$ 
belong to this space,
then $I_X$ contains two 
polynomials of type 
$x_0x_1-f, x_0x_2-g$ 
as in the statement.
The quotient ring 
$R=\cc[x_1,\dots,x_6]/(x_0x_1-f, x_0x_2-g)$ 
is an integral domain 
and the surjectivity of 
$R\to \R(X)$ implies that the 
corresponding morphism
$\Spec(\R(X))\to\Spec(R)$ 
is injective. Since both 
$\Spec(\R(X))$ and $\Spec(R)$ 
are four dimensional integral 
affine varieties, then we 
deduce $R = \R(X)$.
\end{proof}

\begin{bibdiv}
\begin{biblist}

%\bib{ArHaLa}{article}{
%    AUTHOR = {Artebani, Michela},
%    AUTHOR = {Hausen, J{\"u}rgen},
%    AUTHOR = {Laface, Antonio},
%     TITLE = {On {C}ox rings of {K}3 surfaces},
%   JOURNAL = {Compos. Math.},
%  FJOURNAL = {Compositio Mathematica},
%    VOLUME = {146},
%      YEAR = {2010},
%    NUMBER = {4},
%     PAGES = {964--998},
%      ISSN = {0010-437X},
%   MRCLASS = {14J28 (14C20)},
%  MRNUMBER = {2660680},
%       DOI = {10.1112/S0010437X09004576},
%       URL = {http://dx.doi.org/10.1112/S0010437X09004576},
%}

\bib{ArDeHaLa}{book}{
    AUTHOR = {Arzhantsev, Ivan},
    AUTHOR = {Derenthal, Ulric},
    AUTHOR = {Hausen, Juergen},
    AUTHOR = {Laface, Antonio},
     TITLE = {Cox rings},
       URL = {http://www.mathematik.uni-tuebingen.de/~hausen/CoxRings/download.php?name=coxrings.pdf},
}

\bib{Ba}{article}{
    AUTHOR = {Batyrev, Victor V.},
     TITLE = {On the classification of toric {F}ano {$4$}-folds},
      NOTE = {Algebraic geometry, 9},
   JOURNAL = {J. Math. Sci. (New York)},
  FJOURNAL = {Journal of Mathematical Sciences (New York)},
    VOLUME = {94},
      YEAR = {1999},
    NUMBER = {1},
     PAGES = {1021--1050},
      ISSN = {1072-3374},
     CODEN = {JMTSEW},
   MRCLASS = {14M25 (14J45)},
  MRNUMBER = {1703904 (2000e:14088)},
MRREVIEWER = {Jaros{\l}aw A. Wi{\'s}niewski},
}

\bib{BeSo}{book}{
    AUTHOR = {Beltrametti, Mauro},
    AUTHOR = {Sommese, Andrew J.},
     TITLE = {The adjunction theory of complex projective varieties},
    SERIES = {de Gruyter Expositions in Mathematics},
    VOLUME = {16},
 PUBLISHER = {Walter de Gruyter \& Co.},
   ADDRESS = {Berlin},
      YEAR = {1995},
     PAGES = {xxii+398},
      ISBN = {3-11-014355-0},
   MRCLASS = {14C20 (14-02 14E35 14N05)},
  MRNUMBER = {1318687 (96f:14004)},
MRREVIEWER = {Jaros{\l}aw A. Wi{\'s}niewski},
}

\bib{Ha}{book}{
    AUTHOR = {Hartshorne, Robin},
     TITLE = {Algebraic geometry},
      NOTE = {Graduate Texts in Mathematics, No. 52},
 PUBLISHER = {Springer-Verlag},
   ADDRESS = {New York},
      YEAR = {1977},
     PAGES = {xvi+496},
      ISBN = {0-387-90244-9},
   MRCLASS = {14-01},
  MRNUMBER = {0463157 (57 \#3116)},
MRREVIEWER = {Robert Speiser},
}

\bib{Hau}{article}{
    AUTHOR = {Hausen, J{\"u}rgen},
     TITLE = {Cox rings and combinatorics. {II}},
   JOURNAL = {Mosc. Math. J.},
  FJOURNAL = {Moscow Mathematical Journal},
    VOLUME = {8},
      YEAR = {2008},
    NUMBER = {4},
     PAGES = {711--757, 847},
      ISSN = {1609-3321},
   MRCLASS = {14C20 (14J25 14L24 14M25)},
  MRNUMBER = {2499353 (2010b:14011)},
MRREVIEWER = {Ivan V. Arzhantsev},
}

\bib{Jow10}{article}{
    AUTHOR = {Jow, Shin-Yao}
TITLE = {A {L}efschetz hyperplane theorem for {M}ori dream spaces},
Journal = {Mathematische Zeitschrift},
volume = {264},
year = {2010},
doi = {10.1007/s00209-010-0666-9},
masid = {12566326}
}

\bib{Magma}{article}{
    AUTHOR = {Bosma, Wieb},
    AUTHOR = {Cannon, John},
    AUTHOR = {Playoust, Catherine},
     TITLE = {The {M}agma algebra system. {I}. {T}he user language},
      NOTE = {Computational algebra and number theory (London, 1993)},
   JOURNAL = {J. Symbolic Comput.},
  FJOURNAL = {Journal of Symbolic Computation},
    VOLUME = {24},
      YEAR = {1997},
    NUMBER = {3-4},
     PAGES = {235--265},
      ISSN = {0747-7171},
   MRCLASS = {68Q40},
  MRNUMBER = {MR1484478},
       DOI = {10.1006/jsco.1996.0125},
       URL = {http://dx.doi.org/10.1006/jsco.1996.0125},
}

\bib{RaSr1}{article}{
    AUTHOR = {Ravindra, G. V.}
    AUTHOR = {Srinivas, V.},
     TITLE = {The {G}rothendieck-{L}efschetz theorem for normal projective
              varieties},
   JOURNAL = {J. Algebraic Geom.},
  FJOURNAL = {Journal of Algebraic Geometry},
    VOLUME = {15},
      YEAR = {2006},
    NUMBER = {3},
     PAGES = {563--590},
      ISSN = {1056-3911},
   MRCLASS = {14C20 (14C22 14C30)},
  MRNUMBER = {2219849 (2006m:14008)},
MRREVIEWER = {Barry H. Dayton},
}

\bib{RaSr2}{article}{
    AUTHOR = {Ravindra, G. V.}
    AUTHOR = {Srinivas, V.},
     TITLE = {The {N}oether-{L}efschetz theorem for the divisor class group},
   JOURNAL = {J. Algebra},
  FJOURNAL = {Journal of Algebra},
    VOLUME = {322},
      YEAR = {2009},
    NUMBER = {9},
     PAGES = {3373--3391},
      ISSN = {0021-8693},
     CODEN = {JALGA4},
   MRCLASS = {13C20 (14C20)},
  MRNUMBER = {2567426},
MRREVIEWER = {Abdeslam Mimouni},
       DOI = {10.1016/j.jalgebra.2008.09.003},
       URL = {http://dx.doi.org/10.1016/j.jalgebra.2008.09.003},
}

\bib{Sa}{article}{
    AUTHOR = {Samuel, P.},
     TITLE = {Lectures on unique factorization domains},
    SERIES = {Notes by M. Pavman Murthy. Tata Institute of Fundamental
              Research Lectures on Mathematics, No. 30},
 PUBLISHER = {Tata Institute of Fundamental Research},
   ADDRESS = {Bombay},
      YEAR = {1964},
     PAGES = {ii+84+iii},
   MRCLASS = {13.00},
  MRNUMBER = {0214579 (35 \#5428)},
MRREVIEWER = {M. Nagata},
}

\end{biblist}
\end{bibdiv}

\end{document}